\newtheorem{thm}{Theorem}[section]
\newtheorem{lem}[thm]{Lemma}
\newtheorem{prop}[thm]{Proposition}
\newtheorem{dfn}[thm]{Definition}
\newtheorem*{q}{Question}
\newtheorem{rem}[thm]{Remark}
\newcommand\R{\mathbb R} 
\newcommand\Z{\mathbb Z} 
\renewcommand{\d}{\mathrm d}
\newcommand{\del}[1]{\partial #1}
\renewcommand{\leq}{\leqslant}
\renewcommand{\geq}{\geqslant}
\newcommand{\D}{\operatorname{D}}
\newcommand{\T}{\operatorname{T}}
\renewcommand{\L}{\operatorname{L}}
\renewcommand{\S}{\operatorname{S}}
\renewcommand{\H}{\operatorname{H}}
\newcommand{\U}{\operatorname{U}}
\DeclareMathOperator{\interior}{int}
\DeclareMathOperator{\id}{id}
\DeclareMathOperator{\Wh}{Wh}
\newcommand{\can}{\mathrm{can}}
\renewcommand{\O}{\operatorname{O}}
\title{Legendrian submanifolds with Hamiltonian isotopic symplectizations}
\author{Sylvain Courte}
\address{Uppsala Universitet, Sweden}
\email{sylvain.courte@math.uu.se}
\urladdr{http://www2.math.uu.se/~sylco859/}
\begin{document}

\begin{abstract}
In any contact manifold of dimension $2n-1\geq 11$, we construct examples of closed Legendrian submanifolds which are not diffeomorphic but whose Lagrangian cylinders in the symplectization are Hamiltonian isotopic.
\end{abstract}

\maketitle

\tableofcontents

\section{Introduction}

Let $(M, \xi)$ be a contact manifold ($\xi$ is cooriented) and denote by $\S M$ its symplectization, i.e. the set of covectors in $\T^* M$ whose kernel is equal (as cooriented hyperplane) to $\xi$, it comes with a natural projection $\pi : \S M \to M$ which is an $\R$-principal bundle (the $\R$-action is given by multiplying covectors by $e^t$ for $t\in \R$). To any Legendrian submanifold $\Lambda \subset M$, there corresponds its \emph{symplectization} $\S \Lambda =\pi^{-1}(\Lambda)$ which is a Lagrangian submanifold diffeomorphic to $\R \times \Lambda$. Any $\R$-equivariant Hamiltonian isotopy of $\S M$ that takes $\S \Lambda$ to $\S \Lambda'$ induces a contact isotopy of $M$ that takes $\Lambda$ to $\Lambda'$. However, if we forget about $\R$-equivariance, we are lead to consider the following question.

\begin{q}
If $\S \Lambda$ and $\S \Lambda'$ are Hamiltonian isotopic, does it follow that $\Lambda$ and $\Lambda'$ are Legendrian isotopic ?
\end{q}

This is a relative version of the question whether contact manifolds with exact symplectomorphic symplectizations are necessarily contactomorphic. The latter question was answered negatively in \cite{courte_contact_2014} and we explain in this paper that the same phenomenon arises in this case.

\begin{thm}\label{thm:main}
In any closed contact manifold $(M, \xi)$ of dimension $2n-1\geq 11$, there exist closed Legendrian submanifolds which are not diffeomorphic but whose symplectizations are Hamiltonian isotopic.
\end{thm}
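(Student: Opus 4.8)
\emph{Strategy.} The plan is to make the two Legendrians differ by a pure Whitehead--torsion phenomenon — so that they fail to be diffeomorphic — while observing that this torsion becomes invisible once we pass to the symplectization, because $\S\Lambda$ is an \emph{open} Lagrangian and Lagrangian embeddings of open manifolds are flexible. To this end I would first fix a nontrivial, non‑inertial $h$‑cobordism $W^{\,n}$ between closed $(n-1)$‑manifolds $V_0$ and $V_1$ with $V_0$ not diffeomorphic to $V_1$; such $W$ exist precisely because $n-1\geq 5$, which is where the hypothesis $2n-1\geq 11$ enters. Standard constructions (e.g. with homotopy lens spaces) can be arranged so that, in addition, $\pi_1 W$ is finite — hence $H^1(V_i;\R)=0$ — and $V_0,V_1$ are stably parallelizable; I will need both properties below. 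The key topological fact is that $\interior W$ is diffeomorphic both to $V_0\times\R$ and to $V_1\times\R$: choosing an inverse $h$‑cobordism $W'$ with $W\cup_{V_1}W'\cong V_0\times[0,1]$ and $W'\cup_{V_0}W\cong V_1\times[0,1]$, a Mazur swindle — stack infinitely many alternating copies of $W$ and $W'$ and re‑associate — gives $V_0\times[0,\infty)\cong W\cup_{V_1}(V_1\times[0,\infty))$, and gluing on $V_0\times(-\infty,0]$ identifies the right‑hand side with $\interior W$; symmetrically for $V_1$. Thus $V_0\times\R\cong V_1\times\R$ although $V_0\not\cong V_1$.

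\emph{Realizing the $V_i$ as Legendrians.} Next I would Legendrian‑embed both $V_0$ and $V_1$ into a single Darboux ball $B\subset M$. Since $V_i$ is stably parallelizable, $TV_i\otimes\C$ is trivial, so the Gromov--Lees $h$‑principle yields a Lagrangian immersion of $V_i$ into $\R^{2n-2}$; because $H^1(V_i;\R)=0$ this immersion is exact, hence lifts to a Legendrian immersion into $J^1\R^{n-1}\cong\R^{2n-1}_{\mathrm{std}}$; and a generic Legendrian perturbation of it is an embedding, since its double‑point set has negative expected dimension in codimension $n\geq 3$. Its image being compact, we may place it inside $B$, obtaining a closed Legendrian $\Lambda_i\hookrightarrow B\subset M$ with $\Lambda_i\cong V_i$. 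By construction $\Lambda_0$ and $\Lambda_1$ are not diffeomorphic.

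\emph{The symplectizations are Hamiltonian isotopic.} The Lagrangians $\S\Lambda_i=\R\times\Lambda_i\subset\S B\subset\S M$ are, by the first paragraph, two embeddings of the single \emph{open} manifold $\interior W$ into $\S B$, which is an open subset of $(\C^n,\omega_{\mathrm{std}})$. They have isomorphic normal bundles (both Lagrangian, hence the cotangent bundle of $\interior W$) and are null‑homotopic into the contractible $\S B$, so by the $h$‑principle for embeddings of open manifolds in codimension $\geq 3$ they are smoothly isotopic. One then checks that the associated formal Lagrangian data — essentially a lift of the Gauss map to the bundle of Lagrangian planes — can be matched, this data being standard for a cylinder over a Legendrian sitting in a Darboux chart; Gromov's $h$‑principle for Lagrangian embeddings of open manifolds, in its parametric form, then upgrades the smooth isotopy to a Lagrangian isotopy from $\S\Lambda_0$ to $\S\Lambda_1$ supported in $\S B$. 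Finally, since $\S M$ is exact and $H^1(\interior W;\R)=H^1(V_0;\R)=0$, the flux of this isotopy vanishes, so it is generated by a Hamiltonian isotopy supported in $\S B$, which extends by the identity to all of $\S M$. As $\Lambda_0\not\cong\Lambda_1$, this proves the theorem.

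\emph{Expected main obstacle.} The heart of the argument is the last paragraph: the apparent paradox that cylinders over non‑diffeomorphic Legendrians are Hamiltonian isotopic is exactly the statement that the Whitehead torsion separating $\Lambda_0$ from $\Lambda_1$ does not survive to the open Lagrangians $\S\Lambda_i$, and turning this into a proof requires both the parametric $h$‑principle for Lagrangian embeddings of open manifolds and a careful comparison of the formal Lagrangian data of the two cylinders. A secondary, purely geometric‑topological, difficulty is producing the non‑inertial $h$‑cobordism with ends that are stably parallelizable and have finite fundamental group.
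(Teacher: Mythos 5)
Your topological preliminaries (a non-inertial $h$-cobordism with stably parallelizable ends, the Legendrian embeddings of $V_0,V_1$ via the Gromov--Lees $h$-principle, and the smooth swindle showing $\interior W\cong V_0\times\R\cong V_1\times\R$) are fine, but the heart of your argument rests on a principle that does not exist: there is no $h$-principle for Lagrangian \emph{embeddings}, not even for open manifolds. What is flexible is Lagrangian and Legendrian \emph{immersions}; genuine exact Lagrangian embeddings are rigid objects even when noncompact. In particular the cylinder $\S\Lambda$ over a Legendrian remembers fine invariants of $\Lambda$ --- the remark after Proposition \ref{prop:mazur} points out that Hamiltonian isotopic symplectizations force isomorphic Legendrian contact homology --- so if your step ``matching formal Lagrangian data $\Rightarrow$ Lagrangian isotopy of the cylinders'' were valid, the Legendrian contact homology of any Legendrian in a Darboux chart would depend only on its formal (regular homotopy) class, contradicting well-known computations (Chekanov-type pairs and the high-dimensional examples of Ekholm--Etnyre--Sullivan, which are formally isotopic but have different LCH). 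The only flexibility results available in this direction all require looseness somewhere (Murphy's loose Legendrians, Eliashberg--Murphy caps, and the flexible Lagrangian cobordisms of Eliashberg--Ganatra--Lazarev), and the ends of your cylinders are honest, not loose, Legendrians, so none of them applies to $\S\Lambda_0$ versus $\S\Lambda_1$ directly. (A secondary issue in the same paragraph: even granting a Lagrangian isotopy of these noncompact Lagrangians, upgrading it to an ambient Hamiltonian isotopy is not just a flux computation --- one needs control at infinity, and indeed the Hamiltonian isotopy ultimately produced in the paper is not compactly supported.)

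This missing step is exactly what the paper's machinery is built to replace. Instead of comparing the two cylinders by an $h$-principle, the paper embeds the $h$-cobordism itself as a \emph{flexible} Lagrangian cobordism $L\subset\S M$ (Theorem \ref{thm:inverse}(1)), where the index-$n$ attaching spheres are loose in the complement of $L$; that looseness is what licenses the Whitney-trick manipulations in Lemma \ref{lem:elimination} and gives invertibility of $L$ as an exact Lagrangian cobordism (Theorem \ref{thm:inverse}(2)). The Mazur swindle is then carried out symplectically, at the level of exact Lagrangian cobordisms via the infinite composition in Proposition \ref{prop:mazur}, rather than merely smoothly on $\interior W$ as in your first paragraph; it is this symplectic swindle, not an $h$-principle for the open cylinders, that yields the Hamiltonian isotopy between $\S\Lambda$ and $\S\Lambda'$. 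Your proposed example (lens-space products with stably parallelizable ends) is reasonable and close in spirit to the paper's $\L(4,1)\times\T^{n-4}$, but the middle of your argument needs to be replaced wholesale by the flexible-cobordism and invertibility argument.
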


This theorem will follow from a general construction using Lagrangian h-cobordisms and a Mazur trick argument. An essential ingredient in the proof is the notion of flexible Lagrangian cobordisms recently introduced by Eliashberg, Ganatra and Lazarev in \cite{eliashberg_flexible_2015}.

\section{Exact Lagrangian cobordisms and the Mazur trick}

Let $(M, \xi)$ be a closed connected contact manifold, recall that its symplectization $\S M$ is equipped with canonical Liouville vector vield $X_{\can}$ and Liouville form $\lambda_{\can}$ (the restrictions of those of $\T^* M$) and that a contact form for $(M, \xi)$ is a section of the bundle $\S M \to M$. We denote by $\S M^{\geq \alpha}$ the subset of $\S M$ above the section $\alpha$ and use obvious notations for similar subsets of $\S M$ or subsets of a Lagrangian cylinder $\S \Lambda$.

\begin{dfn}\label{def:lagcob}
An \emph{exact Lagrangian cobordism} in $\S M$ is a Lagrangian submanifold $L \subset \S M$ such that there exists two sections $\alpha_-$ and $\alpha_+$ with $\alpha_-<\alpha_+$ at each point of $M$ with the following properties :
\begin{enumerate}
\item There exists two closed Legendrian submanifolds $\Lambda_-$ and $\Lambda_+$ such that 
\[L \cap \S M^{\geq \alpha_+}=\S \Lambda_+^{\geq \alpha_+} \text{ and } L \cap \S M^{\leq \alpha_-}=\S \Lambda_-^{\leq \alpha_-}.\]
\item The region $L \cap \S M^{[\alpha_-, \alpha_+]}$ is a compact cobordism from $\Lambda_-$ to $\Lambda_+$ (without any other boundary).
\item Denoting $i : L \to \S M$ the inclusion, there exists a function $g : L \to \R$ with $i^*\lambda_{\can}=\d g$ which is constant on $L \cap \S M^{\geq \alpha_+}$ and on $L \cap \S M^{\leq \alpha_-}$.
\end{enumerate}
\end{dfn}

\begin{rem}
The function $g$ in definition \ref{def:lagcob} can be extended to $\S M$ as a function (still denoted by $g$) constant on $\S M^{\geq \alpha_+}$ and on $\S M^{\leq \alpha_-}$. The Liouville vector field $X=X_{\can}+X_g$ \footnote{The Hamiltonian vector field $X_g$ is defined by $X_g \lrcorner \omega = - \d g$.} is then tangent to $L$ and coincides with $X_{\can}$ on $\S M^{\geq \alpha_+} \cup \S M^{\leq \alpha_-}$. We say that such a vector field is \emph{adapted} to $L$.
\end{rem}

\begin{rem}\label{rem:stability}
If $\phi$ is a diffeomorphism of $\S M$ that preserve $\lambda_{\can}$ at infinity, then it lifts contact diffeomorphisms $\phi_-$ and $\phi_+$ near $- \infty$ and $+ \infty$ \footnote{By that we mean, above or below some section of $\S M$.} respectively and it is automatically exact ($\phi^*\lambda_{\can}-\lambda_{\can}$ is exact). These diffeomorphisms form a group denoted by $\mathcal{G}$, the subgroup defined by $\{\phi_-=\id, \phi_+=\id\}$ will be denoted by $\mathcal{G}_{\partial}$. The image of an exact Lagrangian cobordism $(L; \Lambda_-, \Lambda_+)$ by $\phi \in \mathcal{G}$ is then an exact Lagrangian cobordism $(\phi(L); \phi_-(\Lambda_-), \phi_+(\lambda_+))$. Exact Lagrangian cobordisms are stable in the following sense : any one-parameter family $L_t$, $t\in[0,1]$, can be written $\phi_t(L_0)$ where $\phi_t \in \mathcal{G}$, $\phi_0=\id$; moreover if $L_t$ is constant at $-\infty$ and at $+\infty$, we can require $\phi_t$ to lie in $\mathcal{G}_{\partial}$.
\end{rem}

\begin{dfn}
Two exact Lagrangian cobordisms $(L_0; \Lambda, \Lambda')$ and $(L_1; \Lambda, \Lambda')$ in $\S M$ are said to be \emph{equivalent} (what we write $\L_0 \sim L_1$) if there exists a Hamiltonian isotopy $\phi_t : \S M \to \S M$, $t\in[0,1]$, and two sections $\alpha_-<\alpha_+$ of $\S M$ such that $\phi_0=\id$, $\phi_1(L_0)=L_1$ and $\phi_t$ equals the identity on $\S M^{\geq\alpha_+} \cup \S M^{\leq \alpha_-}$ (that is $\phi_t \in \mathcal{G}_\del$ with the notations above; according to remark \ref{rem:stability}, this is the same as being isotopic relative boundary through exact Lagrangian cobordisms).
\end{dfn}

Exact Lagrangian cobordisms can be composed : given such $(L; \Lambda, \Lambda')$ and $(L'; \Lambda', \Lambda'')$ we have sections $\alpha$ and $\alpha'$ such that $L \cap \S M^{\geq \alpha} = \S \Lambda'^{\geq \alpha}$ and $L' \cap \S M ^{\leq \alpha'} = \S \Lambda'^{\leq \alpha'} $. If we can find such sections with $\alpha < \alpha'$, then $L$ and $L'$ can naturally be glued because they both coincide with $\S \Lambda'$ in $\S M^{[\alpha, \alpha']}$, now observe that we can always achieve this condition by pushing up $L'$ along the flow $\varphi_t$ of $X_{\can}$. We denote by $L \odot L'$ the resulting exact Lagrangian cobordism. This composition operation satisfies the following properties.

\begin{enumerate}
\item The equivalence class of $L\odot L'$ is independent of choices and depends only on the equivalence classes of $L$ and $L'$.
\item $L \odot \S \Lambda' \sim L$ and $\S \Lambda \odot L \sim L$. 
\item The composition is associative on equivalence classes, that is $L \odot (L' \odot L'') \sim (L\odot L') \odot L''$.
\item Given a sequence $(L_i; \Lambda_i, \Lambda_{i+1})$ for $i \in \Z$ of exact Lagrangian cobordisms, we can construct the infinite composition $\bigodot_{i \in \Z} L_i$ whose Hamiltonian isotopy class (not with compact support) is independent of choices and only depends on the equivalence class of each $L_i$.
\end{enumerate}

\begin{dfn}
An exact Lagrangian cobordism $(L; \Lambda, \Lambda')$ is said to be \emph{invertible} if there exists another exact Lagrangian cobordism $(L'; \Lambda', \Lambda)$ such that $L \odot L' \sim \S \Lambda$ and $L' \odot L \sim \S \Lambda'$.
\end{dfn}

\begin{rem}\label{rem:inverse}
By associativity of composition, if $L \odot L' \sim \S \Lambda$ and $L' \odot L'' \sim \S \Lambda'$, then $L\sim L''$ and $L$ is invertible.
\end{rem}

\begin{prop}\label{prop:mazur}
Let $\Lambda$ and $\Lambda'$ be closed Legendrian submanifolds of a closed contact manifold $(M, \xi)$. The following assertions are equivalent:
\begin{enumerate}
\item $\S \Lambda$ and $\S \Lambda'$ are Hamiltonian isotopic.
\item There exists an \emph{invertible} exact Lagrangian cobordism $(L; \Lambda, \Lambda')$.
\end{enumerate}
\end{prop}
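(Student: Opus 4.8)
The plan is to prove the two implications separately, with the Mazur-trick infinite-composition argument doing the heavy lifting for (2) $\Rightarrow$ (1).

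\emph{The easy direction (2) $\Rightarrow$ (1).} Suppose $(L; \Lambda, \Lambda')$ is invertible with inverse $(L'; \Lambda', \Lambda)$, so $L \odot L' \sim \S\Lambda$ and $L' \odot L \sim \S\Lambda'$. Consider the bi-infinite composition $\bigodot_{i\in\Z} L_i$ with $L_i = L$ for $i$ even and $L_i = L'$ for $i$ odd. By property (4) of the composition, its Hamiltonian isotopy class is well-defined. Now I would group the factors in two different ways: grouping as $\cdots (L'\odot L)(L'\odot L)\cdots$ and using associativity plus $L'\odot L \sim \S\Lambda'$ together with property (2) ($\S\Lambda'$ is a unit), the infinite composition is Hamiltonian isotopic to $\S\Lambda'$ (more precisely, to the cylinder obtained by stacking copies of $\S\Lambda'$, which is just $\S\Lambda'$ up to the $X_{\can}$-flow, i.e. up to Hamiltonian isotopy). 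Grouping instead as $\cdots (L\odot L')(L\odot L')\cdots$, with a single leftover factor $L$ at one end if one is careful about indices — actually the cleanest bookkeeping is to note that shifting the indexing by one identifies the two groupings — and using $L\odot L' \sim \S\Lambda$, the same infinite composition is Hamiltonian isotopic to $\S\Lambda$. Hence $\S\Lambda$ and $\S\Lambda'$ are Hamiltonian isotopic. The one subtlety to check is that the equivalences $\sim$ (Hamiltonian isotopy fixed near $\pm\infty$) assemble to a genuine Hamiltonian isotopy of the infinite composition, which is exactly the content of property (4), so this is routine.

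\emph{The harder direction (1) $\Rightarrow$ (2).} Suppose $\Phi_t : \S M \to \S M$ is a Hamiltonian isotopy with $\Phi_0 = \id$ and $\Phi_1(\S\Lambda) = \S\Lambda'$. The idea is to cut the cylinder $\S\Lambda$ at a level $\alpha_-$ low enough and at a level $\alpha_+$ high enough, apply $\Phi_1$, and interpolate. Concretely: choose sections so that $\Phi_1$ is "close to the identity at $-\infty$" only in the weak sense that $\Phi_1(\S\Lambda)=\S\Lambda'$; push $\Phi_1(\S\Lambda^{\geq \alpha_+})$ up by the $X_{\can}$-flow and push $\S\Lambda^{\leq\alpha_-}$ down, so that the truncated middle piece $L := \big(\S\Lambda^{\leq \alpha_-}\big)\cup\big(\text{image of the middle of }\S\Lambda\text{ under }\Phi_1, \text{ suitably reparametrized}\big)\cup\big(\Phi_1(\S\Lambda)^{\geq\alpha_+'}=\S\Lambda'^{\geq\alpha_+'}\big)$ becomes an exact Lagrangian cobordism from $\Lambda$ to $\Lambda'$. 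Its inverse $L'$ from $\Lambda'$ to $\Lambda$ is built the same way from $\Phi_1^{-1}$ (equivalently, $\Phi_{1-t}\Phi_1^{-1}$-conjugated picture). To see $L \odot L' \sim \S\Lambda$ and $L'\odot L \sim \S\Lambda'$ one runs the isotopy $\Phi_t$ — after the composition, the "$\Phi_1$ then $\Phi_1^{-1}$" part is isotopic rel boundary to the identity via $\Phi_t \circ \Phi_1^{-1} \circ \Phi_{?}$, so the composite cylinder is equivalent to the trivial one. The cleanest formulation: use remark \ref{rem:stability} to write the family $\Phi_t(\S\Lambda)$ as $\phi_t(\S\Lambda)$ with $\phi_t \in \mathcal G$, then the induced boundary contactomorphisms $\phi_{1,-}$ and $\phi_{1,+}$ let one read off $L$ and $L'$ and the cancellation directly.

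\emph{Main obstacle.} The genuinely delicate point is the bookkeeping in (1) $\Rightarrow$ (2): one must honestly produce, from a global Hamiltonian isotopy with \emph{no} control at infinity, an exact Lagrangian cobordism in the precise sense of Definition \ref{def:lagcob} (in particular the primitive $g$ must be constant near both ends), and then verify the two equivalences $L\odot L' \sim \S\Lambda$, $L'\odot L \sim \S\Lambda'$ as isotopies \emph{relative boundary}. This requires carefully choosing the cutting levels and using the $X_{\can}$-flow to create overlap regions where everything is standard; the verification that the resulting composite is trivial amounts to running the isotopy $\Phi_t$ "inside the collar", which is where remarks \ref{rem:stability} and \ref{rem:inverse} are used. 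Once the correct normal form for $L$ and $L'$ is set up, both equivalences and the entire direction (2) $\Rightarrow$ (1) are formal consequences of the composition calculus listed after the definition of $\odot$.
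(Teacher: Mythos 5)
Your direction (2) $\Rightarrow$ (1) is exactly the paper's Mazur-trick argument (infinite composition, two bracketings) and is fine. The gap is in (1) $\Rightarrow$ (2), at precisely the point you flag as the ``main obstacle'' but then do not resolve. Your proposed construction of $L$ --- truncating $\S \Lambda$, $\Phi_1(\S\Lambda)$ and $\S\Lambda'$ at various levels and gluing the pieces ``suitably reparametrized'' --- does not work as stated: the cut loci of these pieces have no reason to match (nothing forces $\Phi_1$ to send a level of $\S\Lambda$ to anything aligned with a level of $\S\Lambda$ or $\S\Lambda'$), so the union is not a smooth submanifold, let alone an exact Lagrangian cobordism in the sense of definition \ref{def:lagcob} with primitive constant near both ends. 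Your fallback, applying remark \ref{rem:stability} to the family $\Phi_t(\S\Lambda)$, is also not available: for intermediate $t$ the submanifold $\Phi_t(\S\Lambda)$ need not be cylindrical near $\pm\infty$ (the Hamiltonian isotopy has no control at infinity), so this is not a family of exact Lagrangian cobordisms and the stability remark does not apply to it.

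The missing idea, which is how the paper proceeds, is to cut off the Hamiltonian rather than the Lagrangian: pick sections $\alpha_1<\alpha_2<\alpha_3<\alpha_4$ and cutoffs $\rho$ (equal to $0$ below $\alpha_1$ and $1$ above $\alpha_2$) and $\rho'$ (equal to $1$ below $\alpha_3$ and $0$ above $\alpha_4$), and let $\psi_t$, $\psi'_t$, $\theta_t$ be generated by $\rho H_t$, $\rho' H_t$, $\rho\rho' H_t$. Then $L=\psi_1(\S\Lambda)$ and $L'=\psi'_1(\S\Lambda)$ are automatically smooth exact Lagrangian cobordisms (from $\Lambda$ to $\Lambda'$ and from $\Lambda'$ to $\Lambda$), and if $\alpha_3/\alpha_2$ is chosen large, the composition $L\odot L'$ sits in $\S M$ as $\theta_1(\S\Lambda)$, so the isotopy $\theta_t$ gives $L\odot L'\sim\S\Lambda$ relative to the ends. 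Note also that this yields only the one relation $L\odot L'\sim\S\Lambda$; the other relation $L'\odot L\sim\S\Lambda'$ does not come ``directly from running the isotopy'' as you assert. The paper instead constructs, by the same cutoff trick, a right inverse $L''$ for $L'$ and then invokes remark \ref{rem:inverse} to conclude that $L$ is invertible with inverse $L'$. Without these two ingredients --- cutoff Hamiltonians to produce honest cobordisms, and the remark \ref{rem:inverse} maneuver in place of a direct verification of both relations --- your proof of (1) $\Rightarrow$ (2) is incomplete.
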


\begin{proof}
$(1) \Rightarrow (2)$: Let $H_t : \S M \to \R$ be a Hamiltonian generating an isotopy $\phi_t$, $t\in [0,1]$, of $\S M$ such that $\phi_0=\id$, $\phi_1(\S\Lambda)=\S \Lambda'$. We pick four sections $\alpha_1<\alpha_2<\alpha_3<\alpha_4$ and two functions $\rho, \rho' : \S M \to [0,1]$ with the following properties:
\begin{itemize}
\item $\rho=1$ in $\S M^{\geq \alpha_2}$ and $\rho=0$ in $\S M^{\leq \alpha_1}$,
\item $\rho'=1$ in $\S M^{\leq \alpha_3}$ and $\rho'=0$ in $\S M^{\geq \alpha_4}$.
\end{itemize}

Denote respectively by $\psi_t$, $\psi'_t$ and $\theta_t$ the Hamiltonian isotopies generated respectively by $\rho H_t$, $\rho' H_t$ and $\rho\rho'H_t$ (these are all well defined for $t \in [0,1]$). Then $L=\psi_1(\S \Lambda)$ and $L'=\psi'_1(\S \Lambda)$ are exact Lagrangian cobordisms respectively from $\Lambda$ to $\Lambda'$ and from $\Lambda'$ to $\Lambda$. Moreover, if we chose $\alpha_3/\alpha_2$ sufficiently big, then $L \odot L'$ sits naturally in $\S M$ as $\theta_1(\S \Lambda)$ and is equivalent to $\S \Lambda$ (via the isotopy $\theta_t$). We can likewise construct a right inverse for $L'$ and we conclude using remark \ref{rem:inverse}.

\hfill\\
$(2) \Rightarrow (1)$ : Let $(L';\Lambda', \Lambda)$ be an inverse for $(L; \Lambda, \Lambda')$ and consider the infinite composition
\[L_\infty = \dots \odot L\odot L' \odot L \odot L' \odot \dots \]

By introducing parentheses in two different ways ($(L\odot L')$ or $(L' \odot L)$) in the above expression, we get that $L_\infty$ is Hamiltonian isotopic to $\S \Lambda$ as well as to $\S \Lambda'$.
\end{proof}

\begin{rem}
It follows from proposition \ref{prop:mazur} together with functoriality properties of symplectic field theory that such Legendrian submanifolds have isomorphic Legendrian contact homology.
\end{rem}

Our goal is now to construct non-trivial invertible Lagrangian cobordisms.

\section{Flexible Lagrangian h-cobordisms}

Let $(M,\xi)$ be a contact manifold of dimension $2n-1 \geq 5$.

\begin{dfn}[\cite{eliashberg_flexible_2015}]
An \emph{exact Lagrangian cobordism} $L \subset \S M$ is called \emph{regular} if there exists an adapted Liouville vector field $X$ and a proper Morse function $f : \S M \to \R$ for which $X$ is a pseudo-gradient. Moreover if there exists such an \emph{adapted pair} $(f,X)$ for which $f$ is excellent (all critical values are distinct) and the attaching spheres of critical points of index $n$ are loose (see \cite{murphy_loose_2012}) in the complement of $L$, then $L$ (as well as the pair $(f,X)$) is said to be \emph{flexible}.
\end{dfn}

Note that the critical points of $f|L$ are necessarily critical points of $f$ and, in the flexible case, there cannot be any critical point of index $n$ on $L$. The definition can obviously be extended to Lagrangian cobordisms into arbitrary flexible Weinstein cobordisms.

Recall that an \emph{h-cobordism} is a cobordism which deformation retracts on its bottom boundary as well as on its top boundary. According to the s-cobordism theorem (see \cite{kervaire_theoreme_1965}), h-cobordisms from a given closed manifold $M$ are classified up to diffeomorphism relative to $M$ by so-called \emph{Whitehead torsion}, an invariant which takes values in the Whitehead group $\Wh(M)$ of $M$ (it actually depends only on $\pi_1 M$). Essentially since each element in a group has an inverse, $h$-cobordisms of dimension $\geq 6$ are invertible for the composition of cobordisms (see \cite{stallings_infinite_1965}).

\begin{thm}\label{thm:inverse}
Let $(M,\xi)$ be a closed contact manifold of dimension $\geq 11$.
\begin{enumerate}
\item Let $\Lambda$ a closed Legendrian submanifold in $M$, and $(L; \Lambda, \Lambda')$ an h-cobordism. Then $L$ can be embedded in $\S M$ has a flexible Lagrangian cobordism starting from $\Lambda$.
\item Any flexible Lagrangian h-cobordism in $\S M$ is invertible (as an exact Lagrangian cobordism).
\end{enumerate}
\end{thm}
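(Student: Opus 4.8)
My plan is to treat the two statements separately, building on the flexibility machinery of \cite{eliashberg_flexible_2015} and the classical theory of h-cobordisms.

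For part (1), I would start with the Lagrangian cylinder $\S\Lambda \subset \S M$ and modify it near some sublevel-to-sublevel region so that the change in topology realizes the given h-cobordism $(L;\Lambda,\Lambda')$. Concretely, an h-cobordism of dimension $n-1 \geq 10$ admits a handle decomposition with handles only of indices in a middle range, and by the handle-cancellation/handle-trading techniques of smooth topology one can arrange a handle decomposition avoiding index $n-1$ entirely (using that $n-1\geq 6$ so that Whitney tricks and handle slides are available, and that an h-cobordism has trivial homology so handles cancel in pairs); this is what lets us hope to avoid index-$n$ critical points of the ambient Weinstein/Morse function after the standard ``Lagrangian handle = ambient handle of one degree higher'' bookkeeping. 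I would then invoke the existence theorem for flexible Lagrangian cobordisms from \cite{eliashberg_flexible_2015}: given an abstract Lagrangian cobordism with a handle decomposition whose handles are all of index $<n$ (as a Lagrangian, so the ambient critical points have index $<n$ as well, or the index-$n$ ones can be made loose), it can be realized as a regular — indeed flexible — exact Lagrangian cobordism in the symplectization, with a prescribed Legendrian negative end $\Lambda$. The condition $2n-1\geq 11$, i.e. $n\geq 6$, is exactly what is needed both for the topological handle manipulations and for Murphy's loose Legendrians / the flexibility constructions to apply.

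For part (2), the strategy is to mimic the classical proof that h-cobordisms of dimension $\geq 6$ are invertible for the composition of cobordisms \cite{stallings_infinite_1965}, upgrading it to the Lagrangian category. Given a flexible Lagrangian h-cobordism $(L;\Lambda,\Lambda')$, I would construct the candidate inverse $(L';\Lambda',\Lambda)$ abstractly as the inverse h-cobordism (whose Whitehead torsion is the inverse of that of $L$, so that $L\odot L'$ has trivial torsion and is hence a product h-cobordism by the s-cobordism theorem). One then needs: (a) that $L'$ can itself be embedded as a \emph{flexible} Lagrangian cobordism with negative end $\Lambda'$ — this is part (1) applied to $L'$; and (b) that the composition $L\odot L'$, which is smoothly a trivial h-cobordism from $\Lambda$ to $\Lambda$, is \emph{equivalent} (isotopic rel boundary through exact Lagrangian cobordisms) to the trivial cylinder $\S\Lambda$. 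Step (b) is where flexibility does the real work: $L\odot L'$ is again a flexible Lagrangian cobordism (flexibility is preserved under composition, since one can concatenate adapted pairs and the loose condition is local near the index-$n$ attaching spheres), and a flexible Lagrangian cobordism on a \emph{smoothly trivial} h-cobordism should be deformable to the trivial one by the h-principle: the uniqueness part of the Eliashberg--Ganatra--Lazarev theory says that flexible Lagrangian cobordisms with the same smooth and formal data are Lagrangian isotopic rel ends, and a smoothly trivial h-cobordism carries the formal data of the trivial cylinder. By Remark \ref{rem:inverse} (with $L''=\S\Lambda'$ playing the auxiliary role after also checking $L'\odot L''$), once $L\odot L'\sim\S\Lambda$ and the symmetric statement $L'\odot L\sim \S\Lambda'$ hold, $L$ is invertible.

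The main obstacle, and the step I would expect to demand the most care, is step (b): showing that a flexible Lagrangian cobordism supported on a smoothly trivial h-cobordism is equivalent to $\S\Lambda$. This is not automatic from flexibility of $L$ and $L'$ individually — one must check that the glued adapted pair on $L\odot L'$ can be arranged to be excellent with \emph{all} index-$n$ attaching spheres loose in the complement (handle slides and births/deaths may be needed to clean up the concatenation), and then apply the relevant h-principle for flexible Lagrangian cobordisms rel boundary. A secondary subtlety is bookkeeping the Whitehead torsion through the composition $\odot$ so that the s-cobordism theorem genuinely gives a product; and throughout, one must keep the negative end fixed equal to $\Lambda$ (resp. $\Lambda'$) so that all the equivalences live in $\mathcal{G}_\partial$ as required by the definition of $\sim$.
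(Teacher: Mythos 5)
Your overall architecture (realize the h-cobordism as a flexible Lagrangian, invert it using the s-cobordism theorem, and reduce to Remark~\ref{rem:inverse}) matches the paper's, but both halves have genuine gaps at exactly the points where the paper does its real work. In part (1) you invoke an existence theorem from \cite{eliashberg_flexible_2015} that would realize the h-cobordism as a flexible Lagrangian ``in the symplectization, with prescribed negative end $\Lambda$''. But building the Lagrangian handles forces you to attach ambient Weinstein handles along $\Lambda$, and after doing so the ambient manifold is no longer $\S M$: it is a flexible Weinstein h-cobordism $W$ from $M$ whose Whitehead torsion is $i(\tau)$, where $\tau \in \Wh(\Lambda)$ is the torsion of $L$ and $i:\Wh(\Lambda)\to\Wh(M)$ is induced by inclusion; this is in general nonzero, so $W$ need not even be diffeomorphic to $M\times[0,1]$. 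The paper's proof hinges on a torsion-cancellation trick absent from your sketch: attach further Weinstein handles of index $2$ and $3$ on top of the positive end, away from $\Lambda'$, producing a flexible Weinstein h-cobordism $W'$ of torsion $-i(\tau)$; then $W\odot W'$ has vanishing torsion, hence is diffeomorphic to $M\times[0,1]$, and by cancellation/uniqueness for flexible Weinstein structures (\cite{cieliebak_stein_2012}, Cor.~14.2) is equivalent to $\S M$ rel the negative boundary, which is what finally places $L$ (continued by $\S\Lambda'$ through $W'$) inside $\S M$. Incidentally, the paper uses ambient handles of the same index ($2$ and $3$) as the Lagrangian handles, which are subcritical since $n\geq 6$, so flexibility is automatic; your ``ambient handle of one degree higher'' bookkeeping is not what is used.

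In part (2), your step (b) rests on a rel-ends uniqueness h-principle for flexible Lagrangian cobordisms in the fixed ambient $\S M$ which is not available in the form you need, and it also overlooks that part (1) gives no control over the positive Legendrian end: the flexible embedding of the inverse h-cobordism $L_1'$ starts at $\Lambda'$ but ends at some new Legendrian $\Lambda_1$, a priori different from $\Lambda$, so $L\odot L_1'$ cannot even be compared with $\S\Lambda$ rel ends as it stands. The paper's mechanism is instead Lemma~\ref{lem:elimination}: a relative critical-point elimination for adapted pairs, following the proof of the h-cobordism theorem, in which looseness (and subcriticality) is used to realize the Whitney-disc isotopies of attaching spheres in the complement of the Lagrangian. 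This produces an adapted Liouville field $X$ without zeros for $L\odot L_1'$; matching $X$-trajectories with $X_{\can}$-trajectories gives a symplectic pseudo-isotopy $\psi$ with $\psi_-=\id$ taking $\S\Lambda$ to $L\odot L_1'$, and the mismatch at $+\infty$ is then corrected by composing with $L_2'=\psi^{-1}(\S\Lambda_1)$, yielding $L\odot L'\sim\S\Lambda$ with $L'=L_1'\odot L_2'$. Note that this only produces one-sided inverses, so Remark~\ref{rem:inverse} is the genuine closing step (repeat the argument to get a right inverse of $L'$), not mere bookkeeping as in your sketch where you assert both $L\odot L'\sim\S\Lambda$ and $L'\odot L\sim\S\Lambda'$ directly. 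To repair your route you would have to prove that a flexible Lagrangian structure on a smoothly trivial cobordism inside the fixed (non-flexible) symplectization is standard relative to the negative end; that statement is essentially the content of Lemma~\ref{lem:elimination} plus the pseudo-isotopy argument, not a citable black box.
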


We need a couple of lemmas. The frst one is proved in \cite{eliashberg_flexible_2015}, proposition 2.5.

\begin{lem}\label{lem:index}
For any regular Lagrangian cobordism $L$ together with an adapted pair $(f,X)$, we can find a homotopy $(f_t, X_t)$ of adapted pairs such that $(f_0, X_0)=(f,X)$ and for all critical point of $f_1$ on $L$ the index is the same for $f_1$ and $f_1|L$. Moreover if $(f_0,X_0)$ is flexible, we can require $(f_t,X_t)$ to be flexible for all $t$.
\end{lem}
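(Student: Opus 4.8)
\textbf{The plan is to prove Lemma \ref{lem:index}.} The statement concerns a regular Lagrangian cobordism $L$ with an adapted pair $(f,X)$, and asks us to deform the pair so that at each critical point lying on $L$, the Morse index of the ambient function $f$ agrees with the Morse index of the restriction $f|L$. The key structural fact is that the critical points of $f|L$ are exactly the critical points of $f$ that happen to lie on $L$ (as noted just after the definition of flexibility): since $X$ is adapted, it is tangent to $L$, so the $X$-flow preserves $L$ and a zero of $X$ on $L$ is a critical point of $f|L$ precisely when it is a critical point of $f$. At such a point $p$, the tangent space splits as $\T_p \S M = \T_p L \oplus N_p$ where $N_p$ is a complement, and the Hessian of $f$ respects, up to the pseudo-gradient condition, the invariant subspaces of $DX_p$; the index of $f$ is the index of $f|L$ plus the number of negative directions of $f$ transverse to $L$. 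Because $L$ is Lagrangian and $X$ is Liouville and tangent to $L$, the linearization $DX_p$ pairs the tangent directions of $L$ with the transverse ones symplectically, which constrains but does not by itself equalize the two indices; the transverse contribution can a priori be anything between $0$ and $n$.

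\textbf{First I would set up the local model near each critical point on $L$.} Near a zero $p$ of the adapted Liouville field $X$ lying on $L$, I would choose Weinstein-type coordinates in which $\S M$ looks like a standard symplectic $\R^{2n}$, $L$ is a linear Lagrangian, and $X$ is a linear (or to leading order linear) Liouville field tangent to $L$. The stable and unstable manifolds of $X$ at $p$ then determine the ambient index, while their intersections with $\T_p L$ determine the restricted index. The goal of the homotopy is to rotate the transverse stable/unstable splitting so that the descending directions of $f$ transverse to $L$ are eliminated, forcing the ambient descending manifold to be contained in $L$ up to the tangent part, which is exactly the condition that the two indices coincide.

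\textbf{Next I would perform the deformation of the pair $(f,X)$ locally and patch.} I would construct a compactly supported homotopy $(f_t, X_t)$ near $p$ that keeps $X_t$ adapted (hence tangent to $L$ and equal to $X_{\mathrm{can}}$ at infinity) and keeps $f_t$ a pseudo-gradient Morse function for $X_t$ with the same critical point $p$, while rotating the transverse negative eigendirections of the Hessian into positive ones, or alternatively into directions tangent to $L$. Since the modification is supported in a neighborhood of $p$ disjoint from the other critical points, I would carry it out independently at each critical point on $L$ and glue, obtaining a global homotopy through adapted pairs with $(f_0,X_0)=(f,X)$ and $(f_1,X_1)$ having the desired index-matching property. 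Throughout, the Liouville/Morse compatibility must be preserved so that $(f_t,X_t)$ remains a genuine adapted pair at every time $t$.

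\textbf{The hard part will be maintaining flexibility along the homotopy.} The final clause requires that if $(f_0,X_0)$ is flexible then every $(f_t,X_t)$ is flexible as well, meaning the attaching spheres of index-$n$ critical points remain loose in the complement of $L$ throughout. Two issues arise: first, the index-adjustment could in principle create or move index-$n$ critical points, which would then need their attaching spheres to be controlled; here I would use the fact that in the flexible case no critical point of index $n$ lies on $L$, so the adjustment only touches critical points off $L$ or critical points on $L$ of index $\neq n$, and I would arrange the local models so that the ambient index never passes through $n$ in a way that destroys looseness. Second, looseness is an open and flexible condition (by the results of \cite{murphy_loose_2012} and the cobordism machinery of \cite{eliashberg_flexible_2015}), so small compactly supported deformations preserve it and, where necessary, the loose chart can be carried along by the isotopy covering the deformation. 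I would invoke proposition 2.5 of \cite{eliashberg_flexible_2015} directly for the technical heart of this construction, since the lemma as stated is essentially a citation of that result, and my task is to organize the local-to-global argument and verify that the flexibility hypothesis is propagated.
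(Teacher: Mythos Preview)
The paper offers no proof of this lemma: it is simply cited as Proposition~2.5 of \cite{eliashberg_flexible_2015}. So there is no argument in the paper to compare against, and your closing sentence---deferring to that reference---is exactly what the paper itself does.

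Your sketch, however, has a genuine obstruction at its central step. You propose to keep a single nondegenerate critical point $p$ fixed and ``rotate the transverse negative eigendirections into positive ones'' (or into $\T_pL$) through a family of adapted Liouville fields. In Darboux coordinates with $L=\{p=0\}$, a linear Liouville field tangent to $L$ takes the block form
\[
A=\begin{pmatrix}P&Q\\0&I-P^{T}\end{pmatrix},\qquad Q=Q^{T},
\]
and its eigenvalues are the eigenvalues $\mu_i$ of $P$ together with the values $1-\mu_i$. One reads off $l=\#\{\,i:\mathrm{Re}\,\mu_i<0\,\}$ and $k=l+\#\{\,i:\mathrm{Re}\,\mu_i>1\,\}$. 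To decrease $k-l$ some $\mu_i$ must cross the line $\mathrm{Re}\,\mu=1$, and at that instant $I-P^{T}$ (hence $A$) acquires an eigenvalue with zero real part, so the zero of $X_t$ degenerates. Thus the pair $(k,l)$ is locally constant along any homotopy of adapted pairs having a single nondegenerate zero at $p$; no local rotation of the model can equalize the indices.

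Consequently the argument in \cite{eliashberg_flexible_2015} cannot be, and is not, a deformation of a fixed local model. It proceeds instead through Weinstein handle moves---a birth of a cancelling pair followed by a slide---which split off the excess transverse stable directions as a new critical point lying \emph{off} $L$; the critical point that remains on $L$ then has matching ambient and restricted indices. Flexibility is carried along because the new critical points are produced with explicit loose data away from $L$. If you wish to go beyond the bare citation, that is the shape of argument to present; the local-rotation plan must be abandoned.
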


\begin{lem}\label{lem:elimination}
Let $(M, \xi)$ be a contact manifold of dimension $\geq 5$. Let $(L; \Lambda, \Lambda')$ be a flexible Lagrangian cobordism of $\S M$ which is diffeomorphic to $\Lambda \times [0,1]$, then there exists an adapted pair without critical points.
\end{lem}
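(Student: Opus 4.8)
The plan is to prove Lemma \ref{lem:elimination} by a handle-cancellation argument carried out simultaneously for the Morse function $f$ on the ambient symplectization and for its restriction $f|L$, using flexibility to dispose of the troublesome middle-dimensional handles. First I would invoke Lemma \ref{lem:index} to replace the given adapted pair $(f,X)$ by a flexible adapted pair for which every critical point lying on $L$ has the same index for $f$ and for $f|L$; thus the critical points of $f$ on $L$ are organized into a handle decomposition of the cobordism $L\cap \S M^{[\alpha_-,\alpha_+]}$ relative to $\Lambda$. Since $L$ is diffeomorphic to $\Lambda\times[0,1]$, this handle decomposition presents a trivial $h$-cobordism (in fact an $s$-cobordism, with vanishing Whitehead torsion — here one uses that the relevant $K$-theory obstruction vanishes, or simply that the product structure is given), so after stabilizing and handle-sliding one can cancel the handles of $f|L$ in order of increasing index. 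The point is that this cancellation of handles of $f|L$ is \emph{realized} by a homotopy of the ambient adapted pair $(f_t,X_t)$: a pair of cancelling handles of $f|L$ of indices $k$ and $k+1$ corresponds to a pair of cancelling handles of $f$ whose attaching data can be made to satisfy the Morse–Smale cancellation condition in $\S M$, because the $1$-parameter families of trajectories used to cancel in $L$ sit inside $L$ and hence inside any neighborhood of $L$, which is all that is needed to arrange transverse intersection of the relevant (un)stable manifolds in $\S M$.

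The key steps, in order, are: (i) normalize to a flexible pair with matching indices via Lemma \ref{lem:index}; (ii) use the product structure of $L$ to get an algebraically, then geometrically, cancellable handle presentation of $f|L$ relative to $\Lambda$ — this is the Whitehead/$s$-cobordism input, valid since $\dim L \geq 6$; (iii) cancel handles of $f|L$ of index $\neq n$ by the Morse-theoretic cancellation lemma applied in $\S M$, being careful that at each stage the pair stays adapted and (by the moreover clause of Lemma \ref{lem:index}) flexible; (iv) handle the index-$n$ handles of $f|L$ separately: by flexibility their attaching spheres are loose in the complement of $L$, and \cite{eliashberg_flexible_2015} supplies a mechanism — the existence of loose charts lets one perform an extra pair of cancelling $(n)$- and $(n+1)$-handles in $\S M$ (a ``flexible stabilization'') and then slide so that the original index-$n$ handle cancels against the new index-$(n+1)$ handle, all while keeping $L$ intact and the pair flexible; (v) conclude that after finitely many such moves the adapted pair has no critical points on $L$, and finally push any remaining ambient critical points away from a neighborhood of $L$ (they do not obstruct, but for cleanliness one arranges $f$ to have no critical points at all by a standard Weinstein-homotopy argument, or one simply records that there are no critical points on $L$, which is what is used later).

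The main obstacle I expect is step (iv): cancelling the index-$n$ handles of $f|L$. Ordinary Morse cancellation fails precisely in the middle dimension because the attaching sphere of an $n$-handle and the belt sphere of an $(n-1)$-handle are complementary-dimensional in $L$ and one cannot remove their intersections by the Whitney trick within $L$ alone; this is exactly why the hypothesis of \emph{flexibility} (looseness of the index-$n$ attaching spheres in the complement of $L$) is indispensable. The technical heart is therefore to transcribe the Eliashberg–Ganatra–Lazarev flexibilization procedure into this relative setting: one must check that introducing the auxiliary cancelling $(n,n+1)$-handle pair in $\S M$ can be done \emph{rel $L$} (so $L$ and its end behavior are unchanged), that the resulting new pair is still adapted to $L$, and that looseness is preserved so the induction can proceed. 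I would carry this out by working in a Weinstein neighborhood of $L$ where the situation is modeled on the cotangent bundle, citing the flexible-Weinstein technology of \cite{cieliebak_stein_2012} and \cite{eliashberg_flexible_2015} for the existence and the rel-boundary character of these moves, and then verifying the adaptedness conditions of Definition \ref{def:lagcob} are maintained throughout.
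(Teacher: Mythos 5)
Your outline does not prove the statement as it is used in the paper. Lemma \ref{lem:elimination} asserts an adapted pair with \emph{no critical points at all} on $\S M$, and this full strength is exactly what the proof of theorem \ref{thm:inverse}(2) consumes: the pseudo-isotopy $\psi$ is built by matching trajectories of $X_{\can}$ with those of $X$, which requires $X$ to be nonvanishing everywhere, not merely along $L$. You treat the elimination of the ambient critical points lying off $L$ as optional cleanliness (``one simply records that there are no critical points on $L$, which is what is used later''), but in the paper this is the substantive half of the argument: after emptying $L$ of critical points one reorders critical values and runs the h-cobordism-theorem scheme in the ambient cobordism (cancel index $0$ against $1$, trade index $i$ for $i+2$ until only $n-1$ and $n$ remain, cancel $n-1$ against $n$), keeping the pair fixed near $L$; this works because the Whitney disks are generically disjoint from $L$ and because subcriticality, respectively looseness in the complement of $L$, lets one realize the required isotopies of attaching spheres as isotropic isotopies (\cite{cieliebak_stein_2012}, ch.~14). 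Your proposal contains no argument for this step, and the ``standard Weinstein-homotopy argument'' you gesture at is precisely where the flexibility hypothesis is needed.

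Relatedly, your step (iv) misplaces the role of flexibility. Once indices are matched via lemma \ref{lem:index}, an index-$n$ critical point of $f$ on $L$ would have its descending disk equal to that of $f|L$, hence its attaching sphere contained in $L$; it could not then be loose in the complement of $L$, so flexibility \emph{forbids} such points altogether (the paper records this immediately after the definition). The ``flexible stabilization rel $L$'' that you describe as the technical heart therefore addresses a vacuous case, while the genuine use of loose charts is for the ambient index-$n$ handles away from $L$ — the step you skipped. Finally, for killing the critical points on $L$ the paper does not perform geometric handle cancellation inside $L$ (your s-cobordism route with handle slides and Whitney tricks in $L$ would in any case require $\dim L\geq 6$, whereas the lemma only assumes $\dim M\geq 5$): it deforms $g=f|L$ through arbitrary functions, allowing births and deaths of critical points, to a function without critical points while avoiding maxima, and then extends this homotopy to a homotopy of flexible adapted pairs supported near $L$ using \cite{cieliebak_stein_2012}, lemma 12.8. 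That extension lemma is the tool your outline implicitly needs when you assert that cancellations in $L$ can be realized ambiently ``because the trajectories sit inside $L$'' while keeping the pair adapted and flexible, but you never invoke it, and without it that assertion is unsubstantiated.
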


\begin{proof}
We start with a flexible adapted pair $(f,X)$. By lemma \ref{lem:index}, we can assume that the critical points on $L$ have same index for $f|L$ and $f$. Since there are no $X$-trajectories going from critical points outside of $L$ to critical points on $L$, we can reorder the critical values so that the critical points on $L$ lie below all the others. Since $L$ is diffeomorphic to $\Lambda \times [0,1]$, the function $g=f|L$ can be deformed via a homotopy $g_t$, $t\in [0,1]$, to a function without critical points and moreover this can be done without introducing any maximum along the deformation. We then extend the homotopy $g_t$ to a homotopy $(f_t, X_t)$ of flexible adapted pairs supported into an arbitrary small neighbourhood of the support of the homotopy $g_t$ (see \cite{cieliebak_stein_2012} lemma 12.8). We then proceed to the cancellation of the remaining critical points which are all outside of $L$, following the proof of the h-cobordism theorem :
\begin{itemize}
\item Cancel index $0$ critical points with some index $1$ critical points.
\item Trade critical points of index $i$ for critical points of index $i+2$, until there only remains critical points of index $n-1$ and $n$.
\item Cancel together critical points of index $n-1$ and $n$.
\end{itemize}

We have to go through these steps keeping $(f,X)$ fixed near $L$. We claim this is possible because every $X$-trajectory between critical points are disjoint from $L$. The main point to notice is that the isotopies of the attaching spheres needed to arrange cancellation positions can be done in the complement of $L$ because they can be localized near Whitney $2$-disks which are generically disjoint from $L$. Subcriticallity or looseness in the complement of $L$ then allows to realize this isotopies as isotropic isotopies as in \cite{cieliebak_stein_2012} chapter 14 (see lemma 14.10 for example).
\end{proof}

\begin{proof}[Proof of theorem \ref{thm:inverse}]
(1) Recall that any h-cobordism of dimension at least 6 can be presented with a Morse function having ony critical points of index $2$ and $3$ (see \cite{kervaire_theoreme_1965}). We first construct a flexible Weinstein cobordism $(W;M, M')$ containing a flexible Weinstein Lagrangian cobordism $(L; \Lambda, \Lambda')$ by attaching Weinstein handles of index $2$ and $3$ on $\Lambda$. Denoting by $\tau \in \Wh(L)$ the Whitehead torsion of $L$, we note that the ambiant cobordism $W$ is also an h-cobordism and its torsion is $i(\tau)$ where $i : \Wh(\Lambda) \to \Wh(M)$ is the map induced by inclusion. We now attach handles of index $2$ and $3$ on top of $M'$ away from $\Lambda'$ to produce a flexible Weinstein h-cobordism $W'$ with torsion $-i(\tau) \in \Wh(M')$ (we identify $\Wh(M) \simeq \Wh(M')$ via the homotopy equivalence induced by $W$). The Lagrangian $L$ can be continued inside of $W'$ by composing with the Lagrangian cylinder $\S \Lambda'$. The composition $W \odot W'$ is a flexible Weinstein cobordism and it is diffeomorphic to $M \times [0,1]$ since its Whitehead torsion vanishes. We can therefore cancel all the handles and show that $W \odot W'$ is equivalent to $\S M$ relative to the negative boundary (see \cite{cieliebak_stein_2012} corollary 14.2). Thus $L$ now sits as a flexible Lagrangian cobordism in $\S M$.
\hfill\\

(2) Let $(L'_1; \Lambda', \Lambda)$ be an inverse cobordism for $(L; \Lambda, \Lambda')$. Using the first point, we can embed $L'$ as a flexible Lagrangian cobordism in $\S M$. Denote by $\Lambda_1$ the positive Legendrian boundary of $L'_1$, note that it is a priori different from $\Lambda$. Now lemma \ref{lem:elimination} allows to find an adapted pair $(f, X)$ without critical points for the composition $L\odot L'_1$. By sending the trajectories of $X_{\can}$ to that of $X$ we find a symplectic pseudo-isotopy $\psi$ of $\S M$ (that is $\psi \in \mathcal{G}$ with $\psi_-=\id$) that takes $\S \Lambda$ to $L\odot L_1'$. We undo this pseudo-isotopy by composing $L'_1$ further with $L'_2=\psi^{-1}(\S \Lambda_1)$, we then get a flexible Lagrangian h-cobordism $L'=L'_1 \odot L'_2$ from $\Lambda'$ to $\Lambda$ such that $L \odot L'$ is equivalent to $\S \Lambda$. We can repeat the same argument to produce a right inverse for $L'$ and the result now follows from remark \ref{rem:inverse}.
\end{proof}

\begin{rem}
Starting from an exact Lagrangian filling $F$ of a Legendrian $\Lambda$, the same method shows that $F$ is Hamiltonian isotopic to the composition of $F$ with any flexible Lagrangian h-cobordism starting from $\Lambda$.
\end{rem}

\section{Examples}
\paragraph{\textbf{An example where $\Lambda$ and $\Lambda'$ are not diffeomorphic}}

For $n\geq 6$, consider the manifold $\Lambda = \L(4,1) \times \T^{n-4}$. It was proved in \cite{farrell_h-cobordant_1967}, that there exists an h-cobordism $(L; \Lambda, \Lambda')$ such that $\Lambda'$ is not diffeomorphic to $\Lambda$. We claim that $\Lambda$  admits a Legendrian embedding into $\R^{2n-1}$ endowed with its standard contact structure. Indeed, $\Lambda$ is parallelizable so we can find a Legendrian bundle monomorphism $\T \Lambda \to \R^{2n-1}$ and then turn it into a Legendrian embedding via Gromov's h-principle (see \cite{eliashberg_introduction_2002} theorem 16.1.3, and note that a generic Legendrian immersion is an embedding). This Legendrian embedding of $\Lambda$ can be implanted in any contact manifold via a Darboux chart. Theorem \ref{thm:main} now follows from theorem \ref{thm:inverse} and proposition \ref{prop:mazur}.
\hfill\\
\paragraph{\textbf{An example where $\Lambda$ and $\Lambda'$ are smoothly isotopic but not Legendrian isotopic}}
The following construction is very similar to that in \cite{courte_contact_2015} section 3, but we repeat some of the arguments there for the convenience of the reader.

Consider the closed $7$-dimensional manifold $\Lambda=\L(5,1) \times \S^4$. Note that $\Lambda$ is parallelizable and that $\pi_3 \Lambda = \pi_3 \L(5,1)=\Z$ (a generator is given by the universal covering map $\S^3 \to \L(5,1)$).

\begin{lem}\label{lem:pi3}
\begin{enumerate}
\item There exists an h-cobordism $(L; \Lambda, \Lambda)$ such that the induced map $f : \Lambda \to \Lambda$ acts by multiplication by $-1$ on $\pi_3 \Lambda$.
\item No diffeomorphism of $\Lambda$ may act by multiplication by $-1$ on $\pi_3 \Lambda$.
\end{enumerate}
\end{lem}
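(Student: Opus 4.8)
The plan is to treat the two parts by quite different means: part (2) is an algebraic-topological obstruction computation, while part (1) is a construction via surgery/h-cobordism theory, and it is natural to do (2) first since it explains what we must arrange in (1).

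For part (2), write $\Lambda = \L(5,1) \times \S^4$ and recall $\pi_3\Lambda \cong \Z$. The idea is that any homotopy self-equivalence of $\Lambda$ preserves more structure than just $\pi_3$, and that extra structure pins down the sign. First I would identify a natural class in $H^*(\Lambda)$ or in the set of homotopy classes $[\Lambda,\Lambda]$ that is manifestly preserved by \emph{diffeomorphisms} but would have to be reversed by a map acting as $-1$ on $\pi_3$. Concretely: the generator of $\pi_3\Lambda$ is the composite $\S^3 \to \L(5,1) \hookrightarrow \Lambda$, i.e. the (based) covering map, and $\L(5,1)$ is an Eilenberg--MacLane-free rational model up to degree $3$; the Hurewicz-type pairing together with the linking form of $\L(5,1)$ (a nondegenerate form on $H_1(\L(5,1);\Z)\cong\Z/5$ with value $1/5$) is the object I want. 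The point is that the linking form is a diffeomorphism invariant of $\L(5,1)$, hence (after checking it survives crossing with $\S^4$, which it does since $H^1(\S^4)=H^2(\S^4)=0$) of $\Lambda$; and the Bockstein/linking-form data of the lens space is compatible with the $\pi_3$-generator in such a way that reversing $\pi_3$ would reverse the linking form. But for $\L(5,1)$ the linking form takes the value $1/5$, and $-1/5 = 4/5$ is \emph{not} equivalent to $1/5$ under the automorphism group $(\Z/5)^\times$ acting by $a \mapsto a^2 \cdot (1/5)$ — indeed $-1$ is not a square mod $5$. That is the numerical heart of the obstruction; I expect this linking-form incompatibility to be the cleanest formulation, and verifying the precise compatibility between the $\pi_3$-action and the linking form on $\L(5,1)\times\S^4$ is the main technical point of part (2).

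For part (1), I want an h-cobordism $(L;\Lambda,\Lambda)$ whose induced homotopy self-equivalence $f\colon\Lambda\to\Lambda$ acts by $-1$ on $\pi_3$. By the s-cobordism theorem, h-cobordisms on $\Lambda$ (dimension $7\geq 6$) are classified by $\Wh(\pi_1\Lambda)$; here $\pi_1\Lambda = \Z/5$, and $\Wh(\Z/5)\cong\Z$ is nontrivial. The classical fact (due to Milnor, and worked out for lens-space-type manifolds) is that the homotopy equivalence induced by an h-cobordism with nontrivial torsion need not be homotopic to the identity; more precisely, given a \emph{homotopy} self-equivalence $f$ of $\Lambda$ which is simple-homotopy \emph{non}trivial in an appropriate sense, one realizes it as the top map of an h-cobordism. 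So the plan is: (a) exhibit a homotopy self-equivalence $f$ of $\Lambda = \L(5,1)\times\S^4$ acting as $-1$ on $\pi_3$ — for instance, using a degree-one self-map of $\L(5,1)$ that is $-1$ on $\pi_3$ but has nontrivial Whitehead torsion (the classical "$h$-cobordant lens spaces" phenomenon: $\L(5,1)$ and $\L(5,4)$ are homotopy equivalent, and the equivalence has controlled torsion and the prescribed $\pi_3$-behavior), crossed with $\id_{\S^4}$; (b) check this $f$ is a homotopy equivalence (it is, being a product of homotopy equivalences) and compute that its Whitehead torsion $\tau(f)\in\Wh(\Z/5)$ lies in the image of the "realizable by h-cobordism" map, which is automatic since every element of $\Wh$ is realized as the torsion of some h-cobordism and one then arranges the top boundary identification to recover $f$ up to homotopy; (c) invoke the s-cobordism classification to produce $L$ with $\partial_+ L \cong \Lambda$ and deformation-retraction data making the induced map homotopic to $f$.

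The main obstacle, I expect, is part (1)(b)--(c): one must be careful that realizing a prescribed self-\emph{homotopy}-equivalence as the monodromy of an h-cobordism requires the torsion of $f$ to satisfy $\tau(f) = \tau(L) + (-1)^{7}\overline{\tau(L)}$ or a similar duality constraint coming from Poincaré duality on the h-cobordism (the torsion of $L$ and of its "flip" must be related by the conjugation involution on $\Wh$), so one needs the torsion of the chosen homotopy equivalence $f$ to be in the image of $1 - (-1)^{\dim}\,(\bar{\ }\,)$, or equivalently that $f$ be realizable. I would handle this exactly as in \cite{courte_contact_2015} section 3: since $\Wh(\Z/5)$ is infinite cyclic and the conjugation involution is trivial on it (the nontrivial unit giving $\Wh(\Z/5)$ is self-conjugate), the relevant realizability/duality condition is satisfiable, and one produces $L$ directly. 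Everything else — parallelizability of $\Lambda$, the identification $\pi_3\Lambda = \Z$ with the stated generator, and the bookkeeping of how the retraction induces $f$ — is routine and can be cited from the s-cobordism theorem and from \cite{courte_contact_2015}.
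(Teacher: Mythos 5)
Your part (2) does not work, and the failure is structural, not just a detail. First, the arithmetic at the ``numerical heart'' is wrong: $-1$ \emph{is} a square mod $5$, since $2^2=4\equiv -1 \pmod 5$. The linking-form constraint you invoke (a degree $\pm 1$ self-equivalence acting by $k$ on $\pi_1=\Z/5$ must satisfy $k^2\equiv \deg \pmod 5$) therefore yields no contradiction: for degree $-1$ it merely forces $k\equiv\pm 2$. Second, and more fundamentally, the linking pairing is a homotopy invariant, natural under degree-$\pm1$ homotopy equivalences, and part (1) of the lemma asserts precisely that a homotopy self-equivalence of $\Lambda$ acting by $-1$ on $\pi_3$ \emph{does} exist (it is induced by the h-cobordism). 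So no homotopy-theoretic invariant can possibly prove (2); the obstruction must distinguish diffeomorphisms (which are simple homotopy equivalences) from general homotopy equivalences. That is exactly the paper's argument: acting by $-1$ on $\pi_3\Lambda$ forces the induced self-map of the $\L(5,1)$ factor to have degree $-1$ (Hurewicz), hence to act by $\pm 2$ on $\pi_1$, and a homotopy equivalence of $\Lambda$ inducing $\pm2$ on $\pi_1$ has nonzero Whitehead torsion (the cited lemma 3.2 of \cite{courte_contact_2015}), which is impossible for a diffeomorphism. Your proposal never brings in torsion at this step, so the key idea of (2) is missing.

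For part (1), your plan (choose a homotopy self-equivalence $f$ with the right $\pi_3$-action, then ``realize'' it as the monodromy of an h-cobordism) leaves the hard step unproved: realizing a torsion $\tau\in\Wh(\Z/5)$ by the s-cobordism machinery produces an h-cobordism $(L;\Lambda,\Lambda')$ in which $\Lambda'$ is a priori only h-cobordant to $\Lambda$; getting $\Lambda'$ diffeomorphic to $\Lambda$ \emph{and} the induced map homotopic to a prescribed $f$ (an inertial h-cobordism with prescribed monodromy) is not automatic, and the duality relation you quote only constrains $\tau(f)$, it does not construct $L$. The supporting details are also off: a self-map acting by $-1$ on $\pi_3\L(5,1)$ has degree $-1$, not $+1$, and $\L(5,4)=\L(5,-1)$ is diffeomorphic to $\L(5,1)$ as an unoriented manifold, so that equivalence has vanishing torsion; the relevant maps are the degree $-1$ self-maps of $\L(5,1)$ inducing $\pm2$ on $\pi_1$ (whose nontrivial torsion is what powers part (2)). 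The paper avoids the realization problem altogether: it thickens such a degree $-1$ self-equivalence to an embedding $j:\L(5,1)\times\D^5\hookrightarrow \L(5,1)\times\interior\D^5$ (the normal bundle is trivial), and takes $L=\L(5,1)\times\D^5\setminus j(\L(5,1)\times\interior\D^5)$, which is an h-cobordism from $\Lambda$ to $\Lambda$ by Milnor's lemma, with induced map acting by $-1$ on $\pi_3$ by construction. You would need either this complement-region construction or a genuine argument for your realization step to complete (1).
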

\begin{proof}
(1): There are exactly two homotopy classes of maps $\L(5,1) \to \L(5,1)$ of degree $-1$ (these are automatically homotopy equivalences) and they respectively induce multiplication by $2$ and $-2$ on $\pi_1\L(5,1)=\Z/5\Z$ (see \cite{cohen_course_1973}, 29.5). We pick such a map and perturb it to an embedding $j:\L(5,1) \to \L(5,1) \times \interior \D^5$ using Whitney's embedding theorem. The normal bundle of $j$ is trivial because it is stably trivial and has rank greater than the dimension of the base. We can therefore extend $j$ to an embedding $\L(5,1) \times \D^5 \to \L(5,1) \times \interior \D^5$ that we still denote by $j$. The region $L=\L(5,1)\times \D^5 \setminus j(\L(5,1) \times \interior \D^5)$ is an h-cobordism from $\Lambda$ to itself (see \cite{milnor_two_1961} lemma 2 p.579). The map $f : \Lambda \to \Lambda$ induced by the cobordism $L$ can be defined as $f=r\circ i$ where $i : \Lambda \to L$ is the inclusion of the negative boundary and $r : L \to \Lambda$ is a deformation retraction on the positive boundary (the homotopy class of $f$ is independent of choices). Since we started with a map of degree $-1$ on $\L(5,1)$, we see that $j$ induces multiplication by $-1$ on $\H_3(\L(5,1)\times \D^5; \Z)\simeq \Z$ as well as on $\pi_3(\L(5,1) \times \D^5) \simeq \Z$ because the Hurewicz homomorphism $\pi_3\L(5,1) \to \H_3(\L(5,1); \Z)$ is non zero. It follows from the commutativity up to homotopy of the following diagram (the vertical arrows are obvious inclusions)
\[\begin{CD}
\L(5,1) \times \D^5     @>j>> \L(5,1) \times \D^5  \\
@AAA        @AAA\\
\Lambda     @>f>>  \Lambda
\end{CD}\]
that the map $f$ also induces multiplication by $-1$ on $\pi_3 \Lambda$.
\hfill\\
(2): If $\psi : \Lambda \to \Lambda$ was such a diffeomorphism, then the map $\L(5,1) \to \L(5,1)$, obtained by composing the inclusion of a factor with $\psi$ and then projection, would have degree $-1$. But then $\psi$ necessarily acts by multiplication by $\pm 2$ on $\pi_1$, in which case the Whitehead torsion of $\psi$ must be non zero (see \cite{courte_contact_2015} lemma 3.2) contradicting the fact that $\psi$ is a diffeomorphism.
\end{proof}

Let $(L; \Lambda, \Lambda)$ be an h-cobordism given by the lemma above. We fix a framing of $\Lambda$ and extend it to a framing of $L$ by using an isomorphism $\T L \to  \R \times \T \Lambda$ lifting the retraction map $r : L \to \Lambda$ on the positive boundary. Note that the induced framing of $\T \Lambda \times \R$ on the negative boundary a priori differs from the given one : it is the image of the given framing by a map $A : \Lambda \to \O(8)\subseteq \U(8)$. Recall that any Legendrian immersion $\Lambda \to \R^{15}$ gives rise to a map $\Lambda \to \U(7)$ well-defined up to homotopy and Gromov's h-principle (see \cite{eliashberg_introduction_2002} theorem 16.1.3) implies that this classifies Legendrian regular homotopy classes. Given an embedding of $L$ as a Lagrangian cobordism in $\S \R^{15}$, we get maps $g:\Lambda \to \U(7)$, $g':\Lambda \to \U(7)$ and $G : L \to \U(8)$ associated respectively to $\del_-L$, $\del_+ L$ and $L$. These maps are related by the following formulas:

\[A.s\circ g \sim G \circ i, \quad s\circ g' \circ r \sim G\]
where $\sim$ here means homotopic, $s:\U(7) \to \U(8)$ is the stabilization map (note that this is an isomorphism on $\pi_3$), $r,i$ are defined as in the proof of the lemma \ref{lem:pi3} and the dot denotes multiplication in $\U(8)$. In particular, we get $s\circ g'$ out of $s\circ g$ :
\[s\circ g' \sim A. s\circ g\circ f^{-1}.\]
Recall from Bott periodicity that $\pi_3 \U(8)\simeq\Z$. Identifying $\pi_3 \Lambda$ and $\pi_3 \U(8)$ with $\Z$, the map induced on $\pi_3$ by $s\circ g$, $s\circ g'$ and $A$ are respectively multiplication by integers $b,b'$ and $a$ and the equation above reads:
\[b'=a-b\]
(note that multiplication on $\U(8)$ induces addition on $\pi_3 \U(8)$).

We now observe that, whatever $a$ is, we can choose $g$ such that $b' \neq b$ and therefore $g'$ is not homotopic to $g$. Indeed 
\begin{itemize}
\item if $a\neq 0$, we take $g$ to be constant so that $b=0$ and $b' \neq 0$,
\item if $a=0$, we take $g=\alpha \circ h \circ p_1$ where $p_1 : \Lambda \to \L(5,1)$ is the projection on the first factor, $h : \L(5,1) \to \S^3$ is a map of degree $1$ and $\alpha: \S^3 \to \U(7)$ corresponds to $1 \in \Z=\pi_3 \U(7)=\pi_3\U(8)$, so that $b=5$ and $b'=-5$.
\end{itemize}

The rest of the construction is the same as in the first example above: we take a Legendrian embedding $\phi : \Lambda \to \R^{15}$ that induces the map $g$ and use theorem \ref{thm:inverse} to obtain an embedding of $L$ as a flexible Lagrangian cobordism in $\S\R^{15}$ with negative boundary $\phi$ and a new Legendrian embedding $\phi' : \Lambda \to \R^{15}$ on the positive boundary which induces the map $g'$. The Legendrian embeddings $\phi$ and $\phi'$ are not homotopic through Legendrian immersions and moreover using the second point of lemma \ref{lem:pi3}, we see that this cannot be arranged by composing $\phi'$ by a diffeomorphism of $\Lambda$. Hence the Legendrian submanifolds $\phi(\Lambda)$ and $\phi'(\Lambda)$ are not Legendrian isotopic though they have Hamiltonian isotopic symplectizations and by Haefliger's embedding theorem (see \cite{haefliger_plongements_1961}) they are smoothly isotopic.

\hfill\\

\paragraph{\textbf{Acknowledgments}}The author would like to thank Yasha Eliashberg and Tobias Ekholm for encouraging discussions, and also Thomas Kragh and Rémi Crétois for useful discussions concerning the second example in section 4. He acknowledges support from the Knut and Alice Wallenberg Foundation.

\bibliographystyle{amsalpha}
\bibliography{biblio}

\end{document}